\DeclareMathOperator{\Id}{Id}
\DeclareMathOperator{\Fil}{Fil}
\newtheorem{theorem}{Theorem}
\newtheorem{lemma}[theorem]{Lemma}
\newtheorem{remark}[theorem]{Remark}
\newtheorem{example}[theorem]{Example}
\newtheorem{corollary}[theorem]{Corollary}
\title{Filters and ideals in pseudocomplemented posets}
\author{Ivan~Chajda and Helmut~L\"anger}
\date{}
\begin{document}

\footnotetext{Support of the research of both authors by the Austrian Science Fund (FWF), project I~4579-N, and the Czech Science Foundation (GA\v CR), project 20-09869L, entitled ``The many facets of orthomodularity'', is gratefully acknowledged.}

\maketitle

\begin{abstract}
We study ideals and filters of posets and of pseudocomplemented posets and show a version of the Separation Theorem, known for ideals and filters in lattices and semilattices, within this general setting. We extend the concept of a $*$-ideal already introduced by Rao for pseudocomplemented distributive lattices and by Talukder, Chakraborty and Begum for pseudocomplemented semilattices to pseudocomplemented posets. We derive several important properties of such ideals. Especially, we explain connections between prime filters, ultrafilters, filters satisfying the $*$-condition and dense elements. Finally, we prove a Separation Theorem for $*$-ideals.
\end{abstract}

{\bf AMS Subject Classification:} 06A11, 06D15

{\bf Keywords:} Pseudocomplemented poset, ideal, principal ideal, prime ideal, $*$-ideal, filter, principal filter, prime filter, ultrafilter, Boolean element, dense element, $*$-condition

Pseudocomplemented posets play an important role both in algebra and its application, e.g.\ in non-classical propositional logics. Hence, the study of pseudocomplemented posets which need be neither lattices nor semilattices forms a long standing task in the theory of ordered sets. Pseudocomplemented posets were originally investigated by O.~Frink (\cite F) and also by G.~Birkhoff in his monograph \cite B. These investigations were continued in numerous papers, see e.g.\ the paper \cite V by P.~V.~Venkatanarasimhan and several papers by the authors and J.~Paseka (cf.\ \cite{C08} -- \cite{CLP}).

In order to reveal the structure of pseudocomplemented posets, one often uses several accompanying structures like various kinds of ideals and filters. Up to now, the majority of results on these concepts was formulated for semilattices or distributive semilattices, see e.g.\ the papers \cite F, \cite{NN} and \cite{TCB}. In particular, an important concept of an ideal having a strong relation to pseudocomplementation, the so-called $*$-ideal, was introduced in \cite R (under the name as $\delta$-ideal) for pseudocomplemented distributive lattices. It is a natural question if this concept can be extended also to pseudocomplemented posets and if it is fruitful enough to yield important results in this theory. We develop this part of theory in the present paper and we show that these concepts are connected with the concepts of ultrafilters, prime filters, dense elements etc. Moreover, we prove several variants of a Separation Theorem both for arbitrary posets and for pseudocomplemented ones.

We start with recalling several fundamental concepts.

Let $\mathbf P=(P,\leq)$ be a poset and $a,b\in P$. We define
\begin{align*}
   (a] & :=\{x\in P\mid x\leq a\}, \\
   [a) & :=\{x\in P\mid a\leq x\}, \\
L(a,b) & :=\{x\in P\mid x\leq a,b\}, \\
U(a,b) & :=\{x\in P\mid a,b\leq x\}.
\end{align*}

In the literature there exist various definitions of ideals and filters of posets. For our purposes, we introduce these concepts as follows.

An {\em ideal} of $\mathbf P$ is a non-empty subset $I$ of $P$ satisfying $U(x,y)\cap I\neq\emptyset$ and $(x]\subseteq I$ for all $x,y\in I$. Hence every ideal of $\mathbf P$ is convex and down directed. Let $\Id\mathbf P$ denote the set of all ideals of $\mathbf P$. A {\em filter} of $\mathbf P$ is a non-empty subset $F$ of $P$ satisfying $L(x,y)\cap F\neq\emptyset$ and $[x)\subseteq F$ for all $x,y\in F$. Hence every filter of $\mathbf P$ is convex and up-directed. Let $\Fil\mathbf P$ denote the set of all filters of $\mathbf P$. An ideal $I$ of $\mathbf P$ is called
\begin{itemize}
\item {\em proper} if $I\neq P$,
\item a {\em principal ideal} if there exists some $a\in P$ with $(a]=I$,
\item a {\em prime ideal} if it is proper and $a,b\in P$ and $L(a,b)\subseteq I$ together imply $a\in I$ or $b\in I$.
\end{itemize}
A filter $F$ of $\mathbf P$ is called
\begin{itemize}
\item {\em proper} if $F\neq P$,
\item a {\em principal filter} if there exists some $a\in P$ with $[a)=F$,
\item a {\em prime filter} if it is proper and $a,b\in F$ and $U(a,b)\subseteq F$ together imply $a\in F$ or $b\in F$,
\item an {\em ultrafilter} if it is a maximal proper filter of $\mathbf P$.
\end{itemize}

In order to be able to characterize prime ideals and prime filters we need a preliminary lemma.

\begin{lemma}\label{lem5}
Let $\mathbf P=(P,\leq)$ be a poset and $I\subseteq P$. Then we have
\begin{enumerate}[{\rm(i)}]
\item $I$ is down-directed if and only if $P\setminus I$ is up-directed.
\item The following are equivalent:
\begin{enumerate}[{\rm(a)}]
\item For all $a,b\in I:U(a,b)\cap I\neq\emptyset$.
\item For all $a,b\in P$ $(U(a,b)\subseteq P\setminus I$ implies $a\in P\setminus I$ or $b\in P\setminus I)$.
\end{enumerate}
\item The following are equivalent:
\begin{enumerate}
\item[{\rm(c)}] For all $a,b\in P$ $(L(a,b)\subseteq I$ implies $a\in I$ or $b\in I)$.
\item[{\rm(d)}] For all $a,b\in P\setminus I:L(a,b)\cap(P\setminus I)\neq\emptyset$.
\end{enumerate}
\end{enumerate}
\end{lemma}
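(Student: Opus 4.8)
The plan is to prove all three equivalences by one uniform move. In the fixed ambient poset $\mathbf P$, a containment such as $U(a,b)\subseteq P\setminus I$ is literally the negation of $U(a,b)\cap I\neq\emptyset$, and likewise $L(a,b)\subseteq I$ is the negation of $L(a,b)\cap(P\setminus I)\neq\emptyset$. So in each of (i)--(iii) I would rewrite one side using this dictionary and then contrapose the inner implication; after that the two sides become the same propositional formula, the only change being that the range of the universally quantified pair shrinks from all of $P$ to $I$ (or to $P\setminus I$) exactly because the contraposed hypothesis puts the two points there.

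Concretely, for (ii): fix $a,b\in P$ and look at the implication in (b), ``$U(a,b)\subseteq P\setminus I$ implies $a\in P\setminus I$ or $b\in P\setminus I$''. Its contrapositive is ``$a\in I$ and $b\in I$ imply $U(a,b)\cap I\neq\emptyset$'', and a universal statement over $a,b\in P$ with the side condition ``$a,b\in I$'' is the same as a universal statement over $a,b\in I$ with no side condition; this is exactly (a). Part (iii) is the mirror image, with $I$ and $P\setminus I$ swapped and $U$ replaced by $L$: the implication in (c), ``$L(a,b)\subseteq I$ implies $a\in I$ or $b\in I$'', contraposes to ``$a\notin I$ and $b\notin I$ imply $L(a,b)\cap(P\setminus I)\neq\emptyset$'', which as a universal statement over $a,b\in P\setminus I$ is precisely (d). Since $L$ and $U$ are symmetric in their two arguments, there is nothing to check about the order of $a$ and $b$.

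For (i) I would unfold the definitions of ``down-directed'' and ``up-directed'' and run the same contraposition; reading ``$I$ is down-directed'' as the condition ``$x\in I$ and $y\leq x$ imply $y\in I$'' (i.e.\ $(x]\subseteq I$ for all $x\in I$), its contrapositive ``$y\notin I$ and $y\leq x$ imply $x\notin I$'' says exactly that $[y)\subseteq P\setminus I$ for every $y\in P\setminus I$, which is ``$P\setminus I$ is up-directed''. I do not expect a genuine obstacle anywhere: all three claims are propositional rearrangements, and the single step that deserves to be spelled out is the identification of ``$S\not\subseteq P\setminus I$'' with ``$S\cap I\neq\emptyset$'' --- that is, that complements are formed inside the fixed set $P$. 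Once that is granted, each of the three equivalences is a one-line contraposition.
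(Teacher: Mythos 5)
Your proof is correct and takes essentially the same route as the paper: parts (ii) and (iii) are proved there by exactly the contraposition/complementation rewriting you describe, and part (i) is dismissed as ``easy to verify.'' Your reading of ``down-directed''/``up-directed'' as ``$(x]\subseteq I$ for all $x\in I$''/``$[x)\subseteq P\setminus I$ for all $x\in P\setminus I$'' is the one consistent with how the paper introduces these words for ideals and filters and with how the lemma is later used (and indeed the only reading under which (i) is true), so there is no gap.
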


\begin{proof}
\
\begin{enumerate}[(i)]
\item This is easy to verify.
\item The following are equivalent:
\begin{align*}
& \text{For all }a,b\in I:U(a,b)\cap I\neq\emptyset. \\
& \text{For all }a,b\in P\big(a,b\notin P\setminus I\text{ implies }U(a,b)\not\subseteq P\setminus I\big). \\
& \text{For all }a,b\in P\big(U(a,b)\subseteq P\setminus I\text{ implies }a\in P\setminus I\text{ or }b\in P\setminus I\big).
\end{align*}
\item The following are equivalent:
\begin{align*}
& \text{For all }a,b\in P\big(L(a,b)\subseteq I\text{ implies }a\in I\text{ or }b\in I\big). \\
& \text{for all }a,b\in P\big(a,b\notin I\text{ implies }L(a,b)\not\subseteq I\big). \\
& \text{For all }a,b\in P\setminus I:L(a,b)\cap(P\setminus I)\neq\emptyset.
\end{align*}
\end{enumerate}
\end{proof}

The following two corollaries are immediate consequences of Lemma~\ref{lem5}.

\begin{corollary}\label{cor2}
Let $\mathbf P=(P,\leq)$ be a poset and $I$ an ideal of $\mathbf P$. Then the following are equivalent:
\begin{enumerate}[{\rm(i)}]
\item $I$ is a prime ideal of $\mathbf P$.
\item $P\setminus I$ is a prime filter of $\mathbf P$.
\item $P\setminus I$ is a filter of $\mathbf P$.
\end{enumerate}
\end{corollary}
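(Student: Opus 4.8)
The plan is to prove the cycle of implications (i) $\Rightarrow$ (iii) $\Rightarrow$ (ii) $\Rightarrow$ (i), letting Lemma~\ref{lem5} do essentially all of the work. Two standing observations set the stage. First, since $I$ is an ideal we have $(x]\subseteq I$ for every $x\in I$, so $I$ is a down-set and hence its complement $P\setminus I$ is an up-set; thus $[x)\subseteq P\setminus I$ for all $x\in P\setminus I$, which disposes of the ``$[x)\subseteq F$'' half of the filter axiom for $F=P\setminus I$ once and for all. Second, again because $I$ is an ideal, it satisfies condition~(a) of Lemma~\ref{lem5}(ii), namely $U(a,b)\cap I\neq\emptyset$ for all $a,b\in I$; so by Lemma~\ref{lem5}(ii) its complement $P\setminus I$ satisfies condition~(b), which is exactly the primeness clause in the definition of a prime filter, applied to $P\setminus I$. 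In short, for the complement of an ideal, ``filter'' already forces ``prime filter''.

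Now the three steps. For (i) $\Rightarrow$ (iii): a prime ideal is proper, so $P\setminus I\neq\emptyset$, and the primeness of $I$ is precisely condition~(c) of Lemma~\ref{lem5}(iii), which by that lemma is equivalent to condition~(d), i.e.\ $L(a,b)\cap(P\setminus I)\neq\emptyset$ for all $a,b\in P\setminus I$; together with the up-set property just noted, $P\setminus I$ is a filter. For (iii) $\Rightarrow$ (ii): if $P\setminus I$ is a filter it is non-empty, and since $I$ is non-empty as well (ideals are), $P\setminus I$ is a proper filter; by the second standing observation it also satisfies the primeness clause, hence is a prime filter. For (ii) $\Rightarrow$ (i): a prime filter is in particular a non-empty filter, so $I\neq P$ and $I$ is proper; being a filter, $P\setminus I$ satisfies condition~(d), whence Lemma~\ref{lem5}(iii) yields condition~(c), and as $I$ is a proper ideal satisfying~(c) it is a prime ideal.

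I do not expect any real obstacle, since the substance lies entirely in Lemma~\ref{lem5}. The only points requiring care are matching the primeness clauses in the definitions of ``prime ideal'' and ``prime filter'' to the equivalences (c) $\Leftrightarrow$ (d) and (a) $\Leftrightarrow$ (b) of that lemma, respectively, and keeping straight that properness of $I$ corresponds to non-emptiness of $P\setminus I$ whereas properness of $P\setminus I$ corresponds to non-emptiness of $I$, so the two ``properness'' bookkeeping steps must not be swapped.
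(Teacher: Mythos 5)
Your proposal is correct and takes essentially the same route as the paper: both reduce everything to the equivalences of Lemma~\ref{lem5} plus the properness/non-emptiness bookkeeping, the only cosmetic difference being that you run the cycle (i)\,$\Rightarrow$\,(iii)\,$\Rightarrow$\,(ii)\,$\Rightarrow$\,(i) while the paper runs (i)\,$\Rightarrow$\,(ii)\,$\Rightarrow$\,(iii)\,$\Rightarrow$\,(i). Your explicit observation that the complement of an ideal automatically satisfies the primeness clause for filters (via Lemma~\ref{lem5}(ii)) is exactly what the paper's terse ``follows from Lemma~\ref{lem5}'' is appealing to.
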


\begin{proof}
$\text{}$ \\
(i) $\Rightarrow$ (ii): \\
This follows from Lemma~\ref{lem5}. \\
(ii) $\Rightarrow$ (iii): \\
This is trivial. \\
(iii) $\Rightarrow$ (i): \\
This follows from (i) and (iii) of Lemma~\ref{lem5}.
\end{proof}

By duality we obtain
 
\begin{corollary}\label{cor1}
Let $\mathbf P=(P,\leq)$ be a poset and $F$ a filter of $\mathbf P$. Then the following are equivalent:
\begin{enumerate}[{\rm(i)}]
\item $F$ is a prime filter of $\mathbf P$.
\item $P\setminus F$ is a prime ideal of $\mathbf P$.
\item $P\setminus F$ is an ideal of $\mathbf P$.
\end{enumerate}
\end{corollary}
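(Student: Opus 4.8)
The plan is to obtain this corollary as the order-dual of Corollary~\ref{cor2}, so that essentially no new work is required. Passing to the dual poset $\mathbf P^\partial=(P,\geq)$ interchanges $(a]$ with $[a)$ and $L(a,b)$ with $U(a,b)$; consequently it interchanges the notions of ideal and filter, and of prime ideal and prime filter, while the properness condition is self-dual. In particular, a subset of $P$ is a filter of $\mathbf P$ if and only if it is an ideal of $\mathbf P^\partial$, and $P\setminus F$ is an ideal (resp.\ prime ideal) of $\mathbf P$ precisely when $P\setminus F$ is a filter (resp.\ prime filter) of $\mathbf P^\partial$.

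Concretely, I would apply Corollary~\ref{cor2} in the poset $\mathbf P^\partial$ to the ideal $I:=F$ of $\mathbf P^\partial$. That corollary then states the equivalence of the three assertions ``$F$ is a prime ideal of $\mathbf P^\partial$'', ``$P\setminus F$ is a prime filter of $\mathbf P^\partial$'', and ``$P\setminus F$ is a filter of $\mathbf P^\partial$''. Reading these three statements back in $\mathbf P$ via the dictionary above turns them exactly into statements (i), (ii) and (iii) of the present corollary, which completes the argument.

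Should an explicit appeal to duality be thought undesirable, the same conclusion can be reached directly by transcribing the proof of Corollary~\ref{cor2} with the roles of $L$ and $U$ exchanged: (i)~$\Rightarrow$~(ii) uses the equivalence of conditions (a) and (b) in Lemma~\ref{lem5}(ii) together with the fact that a filter is up-directed and an up-set (so its complement is a down-set); (ii)~$\Rightarrow$~(iii) is immediate; and (iii)~$\Rightarrow$~(i) uses parts (i) and (ii) of Lemma~\ref{lem5} applied to the set $P\setminus F$. I do not foresee any genuine obstacle here; the only mild subtlety is the bookkeeping for properness, namely that $F\neq P$ holds exactly when $P\setminus F\neq\emptyset$, so that $P\setminus F$ is eligible to be an ideal in the first place, and symmetrically in the converse direction.
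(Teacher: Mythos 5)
Your proposal is correct and is essentially the paper's own argument: the paper derives Corollary~\ref{cor1} from Corollary~\ref{cor2} with the single phrase ``By duality we obtain,'' which is exactly the passage to $\mathbf P^\partial$ that you spell out. Your explicit dictionary between the dual notions, and the optional direct transcription via Lemma~\ref{lem5}, just make that one-line appeal to duality precise.
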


Clearly, the mapping $I\mapsto P\setminus I$ is a bijection from the set of all prime ideals of $\mathbf P$ to the set of all prime filters of $\mathbf P$.

Using previous results, we are able to prove the Separation Theorem well-known for pseudocomplemented lattices and semilattices also for arbitrary posets and also for pseudocomplemented posets under the assumption that the corresponding filter is a prime filter.

\begin{corollary}\label{cor3}
{\rm(}{\bf Separation Theorem for posets}{\rm)} Let $\mathbf P=(P,\leq)$ be a poset, $I$ an ideal of $\mathbf P$ and $F$ a prime filter of $\mathbf P$ and assume $I\cap F=\emptyset$. Then there exists some prime ideal $J$ of $\mathbf P$ with $I\subseteq J$ and $J\cap F=\emptyset$.
\end{corollary}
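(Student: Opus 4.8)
The plan is to notice that, because $F$ is assumed to be a \emph{prime} filter, no Zorn's lemma argument is needed at all: its set-theoretic complement $P\setminus F$ will itself be the required prime ideal. So I would simply put $J:=P\setminus F$ and check the three conditions.

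First, apply Corollary~\ref{cor1} to the prime filter $F$: the implication (i)$\Rightarrow$(ii) there tells us that $P\setminus F$ is a prime ideal of $\mathbf P$. Next, the hypothesis $I\cap F=\emptyset$ says exactly that $I\subseteq P\setminus F=J$, so $J$ contains $I$. Finally, $J\cap F=(P\setminus F)\cap F=\emptyset$ is immediate. Hence $J$ is a prime ideal with $I\subseteq J$ and $J\cap F=\emptyset$, which is the claim.

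There is essentially no obstacle once Corollary~\ref{cor1} is in hand; the only point worth stressing is that the primality hypothesis on $F$ does all the work. Were $F$ merely a proper filter, one would instead have to consider the family of all ideals containing $I$ and disjoint from $F$, check that the union of a chain of such ideals is again an ideal disjoint from $F$ (which follows directly from the definition, since any two elements of the union lie in a common member of the chain), obtain a maximal such ideal $J$ by Zorn's lemma, and then prove $J$ prime — and that last step is genuinely delicate in a general poset, where one cannot just pass to ``the ideal generated by $J\cup\{a\}$'' as in the lattice case. Assuming $F$ prime lets us bypass this entirely.
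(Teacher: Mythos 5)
Your argument is exactly the paper's proof: take $J:=P\setminus F$, invoke Corollary~\ref{cor1} to see that $J$ is a prime ideal, and observe that $I\cap F=\emptyset$ gives $I\subseteq J$ while $J\cap F=\emptyset$ is automatic. Correct and complete.
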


\begin{proof}
Put $J:=P\setminus F$. Then, since $F$ is a prime filter of $\mathbf P$, we conclude that $J$ is a prime ideal of $\mathbf P$ according to Corollary~\ref{cor1}. Of course, $I\subseteq J$ and $J\cap F=\emptyset$.
\end{proof}

\begin{example}\label{ex2}
The poset $\mathbf P=(P,\leq)$ depicted in Figure~1

\vspace*{-3mm}

\begin{center}
\setlength{\unitlength}{7mm}
\begin{picture}(6,10)
\put(3,1){\circle*{.3}}
\put(2,2){\circle*{.3}}
\put(1,3){\circle*{.3}}
\put(5,3){\circle*{.3}}
\put(1,7){\circle*{.3}}
\put(5,7){\circle*{.3}}
\put(3,9){\circle*{.3}}
\put(3,1){\line(-1,1)2}
\put(3,1){\line(1,1)2}
\put(1,3){\line(0,1)4}
\put(1,3){\line(1,1)4}
\put(5,3){\line(-1,1)4}
\put(5,3){\line(0,1)4}
\put(3,9){\line(-1,-1)2}
\put(3,9){\line(1,-1)2}
\put(2.85,.25){$0$}
\put(1.3,1.85){$a$}
\put(.3,2.85){$b$}
\put(5.4,2.85){$c$}
\put(.3,6.85){$d$}
\put(5.4,6.85){$e$}
\put(2.85,9.4){$1$}
\put(2.25,-.75){{\rm Fig.~1}}
\end{picture}
\end{center}

\vspace*{3mm}

is not a lattice. Since $P$ is finite, every ideal and every filter of $\mathbf P$ is principal. We have:

Ideals of $\mathbf P$ are $(0]$, $(a]$, $(b]$, $(c]$, $(d]$, $(e]$ and $(1]$, \\
filters of $\mathbf P$ are $[0)$, $[a)$, $[b)$, $[c)$, $[d)$, $[e)$ and $[1)$, \\
prime ideals of $\mathbf P$ are $(b]$, $(c]$, $(d]$ and $(e]$, \\
prime filters of $\mathbf P$ are $[a)$, $[c)$, $[d)$ and $[e)$ and \\
ultrafilters of $\mathbf P$ are $[a)$ and $[c)$.

We have: $(a]$ is an ideal of $\mathbf P$ that is not a prime ideal of $\mathbf P$, $[c)$ is a prime filter of $\mathbf P$ and $(a]\cap[c)=\emptyset$. According to Corollary~\ref{cor3} there exists some prime ideal $J$ of $\mathbf P$ with $(a]\subseteq J$ and $J\cap[c)=\emptyset$. One can take $J:=(b]$.
\end{example}

\begin{remark}
Let $\mathbf P=(P,\leq)$ be a poset and $a,b\in P$. Then the following are equivalent: $a\leq b$; $(a]\subseteq(b]$; $[b)\subseteq[a)$. Hence, if $\mathbf P$ has only principal ideals then $(\Id\mathbf P,\subseteq)$ is isomorphic to $\mathbf P$, and if $\mathbf P$ has only principal filters then $(\Fil\mathbf P,\subseteq)$ is dually isomorphic to $\mathbf P$. This shows that neither $(\Id\mathbf P,\subseteq)$ nor $(\Fil\mathbf P,\subseteq)$ must be a lattice. It is in accordance with the fact that the intersection of two ideals {\rm(}filters{\rm)} of $\mathbf P$ need not be an ideal {\rm(}a filter{\rm)} of $\mathbf P$. For instance, for the poset $\mathbf P$ from Example~\ref{ex2} we have
\begin{align*}
(d]\cap(e] & =\{0,a,b,c\}\text{ which is not an ideal of }\mathbf P, \\
[b)\cap[c) & =\{d,e,1\}\text{ which is not an filter of }\mathbf P.
\end{align*}
\end{remark}

It is elementary that every ideal of a finite poset is principal. However, we are able to prove also the converse even in a more general setting.

\begin{lemma}\label{lem1}
Let $\mathbf P=(P,\leq)$ be a poset. Then the following are equivalent:
\begin{enumerate}[{\rm(i)}]
\item Every ideal of $\mathbf P$ is principal.
\item $\mathbf P$ satisfies the Ascending Chain Condition.
\end{enumerate}
\end{lemma}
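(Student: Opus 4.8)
The plan is to prove the two implications separately; the direction (ii)$\Rightarrow$(i) is a quick argument via maximal elements, while (i)$\Rightarrow$(ii) is the real content and I would handle it by contraposition.

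For (ii)$\Rightarrow$(i) I would argue as follows. Assume $\mathbf P$ satisfies the Ascending Chain Condition, which I will use in the equivalent form that every non-empty subset of $P$ possesses a maximal element. Let $I$ be an ideal of $\mathbf P$. Since $I\neq\emptyset$, it has a maximal element $a$, and since $I$ is down-closed we get $(a]\subseteq I$. For the reverse inclusion take $x\in I$; as $I$ is up-directed there is some $z\in U(a,x)\cap I$, and then $a\leq z\in I$ together with the maximality of $a$ forces $z=a$, whence $x\leq a$, i.e.\ $x\in(a]$. Thus $I=(a]$ is principal.

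For (i)$\Rightarrow$(ii) I would use the contrapositive. Suppose the Ascending Chain Condition fails, so there is a strictly ascending chain $a_1<a_2<a_3<\cdots$ in $P$. Set $I:=\bigcup_{n\geq1}(a_n]$. The first task is to verify that $I$ is an ideal: it is non-empty; it is down-closed since $x\leq a_n$ and $y\leq x$ give $y\leq a_n$; and it is up-directed since for $x\leq a_m$ and $y\leq a_n$ the element $a_k$ with $k:=\max(m,n)$ lies in $U(x,y)\cap I$. By hypothesis $I=(a]$ for some $a\in P$, so $a\in(a_n]$ for some $n$, i.e.\ $a\leq a_n$; but $a_{n+1}\in(a_{n+1}]\subseteq I=(a]$ yields $a_{n+1}\leq a\leq a_n$, contradicting $a_n<a_{n+1}$. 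Hence the Ascending Chain Condition holds.

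I expect the only subtle point to be the use, in the first implication, of the equivalence between the Ascending Chain Condition and the property that every non-empty subset has a maximal element; everything else — the down-closure and up-directedness of $\bigcup_{n\geq1}(a_n]$, and the final contradiction — is routine.
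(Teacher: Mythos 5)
Your proposal is correct and follows essentially the same route as the paper: the contrapositive of (i)$\Rightarrow$(ii) via the non-principal ideal $\bigcup_{n\geq1}(a_n]$, and (ii)$\Rightarrow$(i) via a maximal element of $I$ together with up-directedness. You merely streamline the second implication (the paper first shows the set of maximal elements of $I$ is a singleton) and spell out the routine verifications the paper leaves implicit.
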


\begin{proof}
$\text{}$ \\
(i) $\Rightarrow$ (ii): \\
Assume that $\mathbf P$ does not satisfy the Ascending Chain Condition. Then in $\mathbf P$ there exists some infinite ascending chain $a_1<a_2<a_3<\cdots$. But then $\bigcup\limits_{n\geq1}(a_n]$ is an ideal of $\mathbf P$ that is not principal. \\
(ii) $\Rightarrow$ (i): \\
Let $I\in\Id\mathbf P$ and let $M$ denote the set of all maximal elements of $I$. Then $M\neq\emptyset$. If there would exist $a,b\in M$ with $a\neq b$ then there would exist some $c\in U(a,b)\cap I$. Because of the maximality of $a$ and $b$ we would obtain $a=c=b$ contradicting $a\neq b$. Hence $M$ is a singleton. Let $d$ denote the unique element of $M$. If $e\in P$ then there exists some $f\in U(d,e)\cap I$. Because of the maximality of $d$ we obtain $e\leq f=d$. This shows $I=(d]$, i.e.\ $I$ is a principal ideal of $\mathbf P$.
\end{proof}

By duality we get the analogous result for filters.

\begin{lemma}
Let $\mathbf P=(P,\leq)$ be a poset. Then the following are equivalent:
\begin{enumerate}[{\rm(i)}]
\item Every filter of $\mathbf P$ is principal.
\item $\mathbf P$ satisfies the Descending Chain Condition.
\end{enumerate}
\end{lemma}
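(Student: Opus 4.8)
The plan is to deduce this statement from Lemma~\ref{lem1} by a straightforward order-reversal argument; no new idea is required. Consider the dual poset $\mathbf P^\partial:=(P,\geq)$. In $\mathbf P^\partial$ the roles of $L$ and $U$ are interchanged and the roles of $(a]$ and $[a)$ are interchanged, so the defining conditions of a filter of $\mathbf P$ (namely $L(x,y)\cap F\neq\emptyset$ and $[x)\subseteq F$ for all $x,y\in F$) are literally the defining conditions of an ideal of $\mathbf P^\partial$. Hence $\Fil\mathbf P=\Id\mathbf P^\partial$, and moreover $F$ is a principal filter of $\mathbf P$ if and only if $F$ is a principal ideal of $\mathbf P^\partial$, since $[a)$ in $\mathbf P$ equals the principal down-set of $a$ in $\mathbf P^\partial$.

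Next, I would record the obvious fact that $\mathbf P$ satisfies the Descending Chain Condition if and only if $\mathbf P^\partial$ satisfies the Ascending Chain Condition, because a strictly descending chain $a_1>a_2>a_3>\cdots$ in $\mathbf P$ is exactly a strictly ascending chain in $\mathbf P^\partial$ and vice versa. Applying Lemma~\ref{lem1} to the poset $\mathbf P^\partial$ then yields: every ideal of $\mathbf P^\partial$ is principal if and only if $\mathbf P^\partial$ satisfies the Ascending Chain Condition. Translating both sides back through the identifications of the previous paragraph gives precisely the equivalence of (i) and (ii) for $\mathbf P$.

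The only point that needs (minimal) care is checking that all the notions occurring in Lemma~\ref{lem1} — ideal, principal ideal, and the Ascending Chain Condition — are genuinely carried to their filter-analogues by the self-duality $\leq\mapsto\geq$ of the underlying language of posets; this is routine. As an alternative to invoking duality, one could instead give a direct proof mirroring that of Lemma~\ref{lem1}: for (i)$\Rightarrow$(ii) use that an infinite descending chain $a_1>a_2>\cdots$ produces the non-principal filter $\bigcup_{n\geq1}[a_n)$, and for (ii)$\Rightarrow$(i) replace "maximal elements of $I$" by "minimal elements of $F$", use the up-directedness condition $L(x,y)\cap F\neq\emptyset$ in place of $U(x,y)\cap I\neq\emptyset$, and conclude that $F=[d)$ for the unique minimal element $d$ of $F$. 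Either way there is no real obstacle; the duality route is the shortest and is the one I would write down.
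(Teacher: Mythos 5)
Your proposal is correct and matches the paper's approach exactly: the paper offers no written proof, stating only ``By duality we get the analogous result for filters,'' and your argument is precisely that duality reduction to Lemma~\ref{lem1}, spelled out carefully. Nothing further is needed.
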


\begin{corollary}
Every ideal and every filter of a finite poset is principal.
\end{corollary}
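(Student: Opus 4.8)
The plan is to deduce this immediately from Lemma~\ref{lem1} together with its dual stated just above it. The only point that needs checking is that a finite poset $\mathbf P=(P,\leq)$ satisfies both the Ascending Chain Condition and the Descending Chain Condition. This is clear: since $P$ is finite, $\mathbf P$ contains no infinite chains whatsoever, so in particular it admits neither an infinite strictly ascending chain $a_1<a_2<a_3<\cdots$ nor an infinite strictly descending one. Hence the ACC and the DCC both hold (vacuously).

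Now I would simply invoke the two equivalences. By Lemma~\ref{lem1}, the ACC implies that every ideal of $\mathbf P$ is principal; by the dual lemma, the DCC implies that every filter of $\mathbf P$ is principal. Combining these gives the corollary.

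I do not expect any genuine obstacle here, since all the substantive work has already been carried out in the proof of Lemma~\ref{lem1} and its dual. If one preferred a self-contained argument, one could instead repeat the $\mathrm{(ii)}\Rightarrow\mathrm{(i)}$ reasoning of Lemma~\ref{lem1} directly in the finite setting: for a nonempty ideal $I$ the set $M$ of its maximal elements is nonempty because $P$ is finite, and the condition $U(x,y)\cap I\neq\emptyset$ for $x,y\in I$ together with maximality forces $M$ to be a singleton $\{d\}$ with $I=(d]$, and dually for filters. But routing everything through the previously proved lemmas is cleaner and is the approach I would take.
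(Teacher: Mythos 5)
Your argument is correct and is exactly the route the paper intends: the corollary is stated without proof as an immediate consequence of Lemma~\ref{lem1} and its dual, since a finite poset trivially satisfies both the Ascending and the Descending Chain Condition. Nothing further is needed.
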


Let $\mathbf P=(P,\leq,0)$ be a poset with $0$. Then a filter $F$ of $\mathbf P$ is proper if and only if $0\notin F$. Applying Zorn's Lemma yields the existence of at least one ultrafilter of $\mathbf P$. Now let $a\in P$. Then the element $a^*\in P$ is called the {\em pseudocomplement} of $a$ if it is the greatest element $x$ of $P$ satisfying $L(a,x)=\{0\}$. Hence $a^*$ is the pseudocomplement of $a$ if and only if for arbitrary $x\in P$, $L(a,x)=\{0\}$ is equivalent to $x\leq a^*$. The poset $\mathbf P$ is called {\em pseudocomplemented} if every element $x$ of $P$ has a pseudocomplement $x^*$. In such a case we will denote $\mathbf P$ in the form $\mathbf P=(P,\leq,{}^*,0)$. Here $^*$ is a unary operation on $P$, called the {\em pseudocomplementation}. An element $a$ of $P$ is called
\begin{itemize}
\item {\em Boolean} if $a^{**}=a$,
\item {\em dense} if $a^*=0$.
\end{itemize}
Let $B(\mathbf P)$ denote the set of all Boolean elements of $\mathbf P$ and $D(\mathbf P)$ the set of all dense elements of $\mathbf P$.

The concepts introduced before are illuminated by the following example.

\begin{example}\label{ex4}
The poset from Example~\ref{ex2} is a pseudocomplemented poset $\mathbf P=(P,\leq,{}^*,0)$ with the following pseudocomplementation
\[
\begin{array}{c|ccccccc}
 x  & 0 & a & b & c & d & e & 1 \\
\hline
x^* & 1 & c & c & b & 0 & 0 & 0
\end{array}
\]
We have $B(\mathbf P)=\{0,b,c,1\}$ and $D(\mathbf P)=\{d,e,1\}$. Observe that $D(\mathbf P)$ is not a filter of $\mathbf P$ since $c,d\in D(\mathbf P)$, but $L(c,d)\cap D(\mathbf P)=\emptyset$. Moreover, note that the proper ideals $(0]$ and $(a]$ are not prime ideals.
\end{example}

The following example shows a pseudocomplemented posets which has an ideal that is not principal.

\begin{example}\label{ex1}
Let $\mathbb N$ denote the set of all non-negative integers, put $P:=\mathbb N\cup\{\infty\}$ and define a unary operation $^*$ on $P$ as follows:
\[
x^*:=\left\{
\begin{array}{ll}
1 & \text{if }x=0, \\
0 & \text{otherwise}.
\end{array}
\right.
\]
{\rm(}$x\in P${\rm)}. Then $\mathbf P:=(P,\leq,{}^*,0)$ is a pseudocomplemented poset. We have:

Ideals of $\mathbf P$ are $(a]$ with $a\in P$ together with $\mathbb N$ where $\mathbb N$ is not principal, \\
filters of $\mathbf P$ are $[a)$ with $a\in P$, \\
prime ideals of $\mathbf P$ are $(a]$ with $a\in P\setminus\{\infty\}$ together with $\mathbb N$, \\
prime filters of $\mathbf P$ are $[a)$ with $a\in P\setminus\{0\}$, \\
the only ultrafilter of $\mathbf P$ is $[1)$, \\
$B(\mathbf P)=\{0,1\}$ and $D(\mathbf P)=[1)$.

Observe that $D(\mathbf P)$ is a filter of $\mathbf P$ and all proper ideals of $\mathbf P$ are prime ideals.
\end{example}

Example~\ref{ex4} together with Example~\ref{ex2} shows how the Separation Theorem, i.e.\ Corollary~\ref{cor3}, works also for pseudocomplemented posets. Next we show another such example.

\begin{example}\label{ex3}
The poset $\mathbf P=(P,\leq,{}^*,0)$ depicted in Figure~2

\vspace*{-3mm}

\begin{center}
\setlength{\unitlength}{7mm}
\begin{picture}(6,14)
\put(3,1){\circle*{.3}}
\put(1,3){\circle*{.3}}
\put(5,3){\circle*{.3}}
\put(3,5){\circle*{.3}}
\put(1,7){\circle*{.3}}
\put(5,7){\circle*{.3}}
\put(1,11){\circle*{.3}}
\put(5,11){\circle*{.3}}
\put(3,13){\circle*{.3}}
\put(1,3){\line(1,1)4}
\put(1,3){\line(1,-1)2}
\put(5,3){\line(-1,1)4}
\put(5,3){\line(-1,-1)2}
\put(1,7){\line(0,1)4}
\put(1,7){\line(1,1)4}
\put(5,7){\line(-1,1)4}
\put(5,7){\line(0,1)4}
\put(3,13){\line(-1,-1)2}
\put(3,13){\line(1,-1)2}
\put(2.85,.25){$0$}
\put(.3,2.85){$a$}
\put(5.4,2.85){$b$}
\put(3.4,4.85){$c$}
\put(.3,6.85){$d$}
\put(5.4,6.85){$e$}
\put(.3,10.85){$f$}
\put(5.4,10.85){$g$}
\put(2.85,13.4){$1$}
\put(2.25,-.75){{\rm Fig.~2}}
\end{picture}
\end{center}

\vspace*{3mm}

is pseudocomplemented with the following pseudocomplementation
\[
\begin{array}{c|ccccccccc}
 x  & 0 & a & b & c & d & e & f & g & 1 \\
\hline
x^* & 1 & b & a & 0 & 0 & 0 & 0 & 0 & 0
\end{array}
\]
It is not a lattice. Since $P$ is finite, every ideal and every filter of $\mathbf P$ is principal. We have:

Ideals of $\mathbf P$ are $(0]$, $(a]$, $(b]$, $(c]$, $(d]$, $(e]$, $(f]$, $(g]$ and $(1]$, \\
filters of $\mathbf P$ are $[0)$, $[a)$, $[b)$, $[c)$, $[d)$, $[e)$, $[f)$, $[g)$ and $[1)$, \\
prime ideals of $\mathbf P$ are $(a]$, $(b]$, $(d]$, $(e]$, $(f]$ and $(g]$, \\
prime filters of $\mathbf P$ are $[a)$, $[b)$, $[d)$, $[e)$, $[f)$ and $[g)$, \\
ultrafilters of $\mathbf P$ are $[a)$ and $[b)$, \\
$B(\mathbf P)=\{0,a,b,1\}$ and $D(\mathbf P)=[c)$.

We have: $(c]$ is an ideal of $\mathbf P$ that is not a prime ideal of $\mathbf P$, $[e)$ is a prime filter of $\mathbf P$ and $(c]\cap[e)=\emptyset$. According to Corollary~\ref{cor3} there exists some prime ideal $J$ of $\mathbf P$ with $(c]\subseteq J$ and $J\cap[e)=\emptyset$. One can take $J:=(d]$.
\end{example}

Now let $\mathbf P=(P,\leq,{}^*,0)$ be a pseudocomplemented poset and $a,b\in P$. We repeat several well-known facts on pseudocomplemented posets:
\begin{itemize}
\item $a\leq b$ implies $b^*\leq a^*$, $a\leq a^{**}$ and $a^{***}=a^*$.
\item The following are equivalent: $L(a,b)=\{0\}$; $a\leq b^*$; $a^{**}\leq b^*$; $b\leq a^*$; $b^{**}\leq a^*$; $L(a^{**},b)=\{0\}$.
\item $1:=0^*$ is the greatest element of $\mathbf P$ and $1^*=0$.
\item $0,1\in B(\mathbf P)=P^*$ and $1\in D(\mathbf P)$.
\item $B(\mathbf P)\cap D(\mathbf P)=\{1\}$
\item If $a\in D(\mathbf P)$ and $a\leq b$ then $b\in D(\mathbf P)$.
\end{itemize}
For every subset $A$ of $P$ put $A_*:=\{x\in P\mid x^*\in A\}$ and $A^*:=\{x^*\mid x\in A\}$. Some properties of the operator $_*$ are listed in the following lemma.

\begin{lemma}\label{lem4}
Let $\mathbf P=(P,\leq,{}^*,0)$ be a pseudocomplemented poset, $a\in P$, $I$ an ideal of $\mathbf P$, $F$ a filter of $\mathbf P$, $A,B\subseteq P$ and $A_i\subseteq P$ for all $i\in I$. Then the following hold:
\begin{enumerate}[{\rm(i)}]
\item $I=P$ if and only if $I_*=P$, and $F=P$ if and only if $F_*=P$.
\item $F\neq P$ implies $F\cap F_*=\emptyset$.
\item $a\in A_*$ if and only if $a^{**}\in A_*$.
\item $A\subseteq B$ implies $A_*\subseteq B_*$.
\item $(\bigcup\limits_{i\in I}A_i)_*=\bigcup\limits_{i\in I}(A_i)_*$ and $(\bigcap\limits_{i\in I}A_i)_*=\bigcap\limits_{i\in I}(A_i)_*$.
\end{enumerate}
\end{lemma}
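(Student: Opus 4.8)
The plan is to verify the five assertions by unwinding the definition $A_*=\{x\in P\mid x^*\in A\}$, proceeding from the purely set-theoretic items to the ones that use the order structure of $\mathbf P$. I would begin with (iv) and (v): if $A\subseteq B$ and $x\in A_*$, then $x^*\in A\subseteq B$, whence $x\in B_*$; and for (v) one simply observes that $x\in(\bigcup_{i\in I}A_i)_*$ means $x^*\in A_i$ for some $i\in I$, i.e.\ $x\in(A_i)_*$ for some $i$, and dually for intersections. No properties of $^*$ or of $\mathbf P$ beyond the definition are needed for these two items.

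Next I would handle (iii), which is the only place the recalled identity $a^{***}=a^*$ enters: $a\in A_*$ iff $a^*\in A$, while $a^{**}\in A_*$ iff $a^{***}\in A$ iff $a^*\in A$, so the two conditions are literally the same. For (i) I would use that $1:=0^*$ is the greatest element and $0$ the least element of $\mathbf P$. The implications $I=P\Rightarrow I_*=P$ and $F=P\Rightarrow F_*=P$ are immediate since $x^*\in P$ for every $x\in P$. Conversely, if $I_*=P$ then $0\in I_*$, so $0^*=1\in I$; since $I$ is an ideal (hence a down-set) and $1$ is the top element, $I=(1]=P$. Dually, if $F_*=P$ then $1\in F_*$, so $1^*=0\in F$; since $F$ is a filter (hence an up-set) and $0$ is the bottom element, $F=[0)=P$.

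Finally, (ii) is the one step I expect to require a genuine argument rather than a definitional manipulation. Suppose $F\neq P$ but $x\in F\cap F_*$, so that both $x\in F$ and $x^*\in F$. Because $F$ is a filter we have $L(x,x^*)\cap F\neq\emptyset$; but $L(x,x^*)=\{0\}$ by the very definition of the pseudocomplement $x^*$ as the greatest element $y$ with $L(x,y)=\{0\}$. Hence $0\in F$, and then $F=P$ by (i) (or directly, since an up-set containing the bottom element is all of $P$), contradicting $F\neq P$. The only subtlety worth flagging is precisely this combination — invoking the filter condition $L(x,y)\cap F\neq\emptyset$ together with $L(x,x^*)=\{0\}$ — while everything else in the lemma is bookkeeping with the definition of the operator $_*$.
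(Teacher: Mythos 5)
Your proof is correct and follows essentially the same route as the paper: items (iii)--(v) by direct unwinding of the definition of $A_*$ (using $a^{***}=a^*$ for (iii)), item (i) via $1=0^*$, $1^*=0$ and the down-set/up-set property of ideals and filters, and item (ii) by combining the filter condition $L(x,x^*)\cap F\neq\emptyset$ with $L(x,x^*)=\{0\}$ to force $0\in F$ and hence $F=P$. No gaps; nothing further is needed.
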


\begin{proof}
\
\begin{enumerate}[(i)]
\item The following are equivalent: $I=P$; $1\in I$; $0^*\in I$; $x^*\in I$ for all $x\in P$; $x\in I_*$ for all $x\in P$; $I_*=P$. Moreover, the following are equivalent: $F=P$; $0\in F$; $1^*\in F$; $x^*\in F$ for all $x\in P$; $x\in F^*$ for all $x\in P$; $F_*=P$.
\item Everyone of the following statements implies the next one: $F\cap F_*\neq\emptyset$; there exists some $b\in F\cap F_*$; $b,b^*\in F$; $\{0\}\cap F=L(b,b^*)\cap F\neq\emptyset$; $0\in F$; $F=P$.
\item The following are equivalent: $a\in A_*$; $a^*\in A$; $a^{***}\in A$; $a^{**}\in A_*$.
\item If $A\subseteq B$ then everyone of the following statements implies the next one: $a\in A_*$; $a^*\in A$; $a^*\in B$; $a\in B_*$.
\item The following are equivalent: $a\in(\bigcup\limits_{i\in I}A_i)_*$; $a^*\in\bigcup\limits_{i\in I}A_i$; there exists some $i\in I$ with $a^*\in A_i$; there exists some $i\in I$ with $a\in(A_i)_*$; $a\in\bigcup\limits_{i\in I}(A_i)_*$. Moreover, the following are equivalent: $a\in(\bigcap\limits_{i\in I}A_i)_*$; $a^*\in\bigcap\limits_{i\in I}A_i$; $a^*\in A_i$ for all $i\in I$; $a\in(A_i)_*$ for all $i\in I$; $a\in\bigcap\limits_{i\in I}(A_i)_*$.
\end{enumerate}
\end{proof}

The concept of a $*$-ideal was introduced for pseudocomplemented distributive lattices in \cite R and for pseudocomplemented semilattices in \cite{TCB}, see also \cite{NN}. Let us note that it was introduced under the different name $\delta$-ideal. We extend this concept to pseudocomplemented posets as follows.

Let $\mathbf P=(P,\leq,{}^*,0)$ be a pseudocomplemented poset. A {\em $*$-ideal} of $\mathbf P$ is an ideal $I$ of $\mathbf P$ such there exists some filter $F$ of $\mathbf P$ with $F_*=I$.

In the following theorem we characterize $*$-ideals of a pseudocomplemented posets.

\begin{theorem}
Let $\mathbf P=(P,\leq,{}^*,0)$ be a pseudocomplemented poset and $I$ an ideal of $\mathbf P$. Consider the following conditions:
\begin{enumerate}[{\rm(i)}]
\item The ideal $I$ is a $*$-ideal of $\mathbf P$.
\item $I^{**}\subseteq I$
\item If $x\in P$, $y\in I$ and $x^*\leq y$ then $y^*\leq x$.
\end{enumerate}
Then {\rm(i)} implies {\rm(ii)}, and {\rm(ii)} together with {\rm(iii)} implies {\rm(i)}.
\end{theorem}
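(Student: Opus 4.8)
First I would prove the implication (i) $\Rightarrow$ (ii). Assume $I$ is a $*$-ideal, so $I = F_*$ for some filter $F$ of $\mathbf P$. The key observation is a description of $I^{*} := \{x^* \mid x \in I\}$ and then of $I^{**}$ in terms of $F$. Since $I = F_* = \{x \in P \mid x^* \in F\}$, an element of $I^*$ has the form $x^*$ with $x^* \in F$; conversely, using $a^{***} = a^*$, every element $f^*$ with $f \in F$ lies in $I^*$ because $(f^*)^* = f^{**}$ need not be in $F$ — so I must be a bit careful here. A cleaner route: I claim $I^{**} \subseteq F_*$ directly. Take $z \in I^{**}$, say $z = y^{**}$ with $y \in I^* $, i.e.\ $y = x^*$ for some $x \in I = F_*$, hence $x^* \in F$. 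Then $z = x^{***} = x^*$, so in fact $z \in F$. But we want $z \in I = F_*$, i.e.\ $z^* \in F$; from $z = x^* \in F$ we get $z^* = x^{**}$, and since $x \leq x^{**}$ with $x \in I$ and $I$ convex... that does not immediately help. The honest approach is: $I^{**} \subseteq I$ is equivalent (using $I$ down-directed and $(x]\subseteq I$) to the statement that $x \in I$ implies $x^{**} \in I$; and for $x \in F_*$ we have $x^* \in F$, whence $x^* = x^{***} = (x^{**})^*$, so $x^{**} \in F_* = I$. That is the clean argument, and it is short.

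Next the implication (ii) $\wedge$ (iii) $\Rightarrow$ (i). The natural candidate for the witnessing filter is $F := I^* = \{x^* \mid x \in I\}$ — or perhaps its up-closure $[I^*)$, or the set $\{x \in P \mid x^* \in I^{**}\}$; I would first check whether $I^*$ is already a filter. Up-closure: if $x^* \leq y$ with $x \in I$, is $y \in I^*$? Here condition (iii) enters: it gives $y^* \leq x$, hence $y^* \in I$ (as $I$ is a down-set), hence $y^{**} \in I^*$; but I need $y \in I^*$, not $y^{**}$. So instead take the candidate $F := I_*{}^{*}$? Let me reconsider. Given the phrasing of the definition ($F_* = I$), the symmetric guess is $F := \{x \in P \mid x^* \in I\} = I_*$ — no wait, we'd then need $(I_*)_* = I$, i.e.\ $x^{**} \in I \Leftrightarrow x \in I$, which is exactly (ii) together with $x \le x^{**}$ and convexity of $I$ for the forward direction. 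And $I_*$ being a filter is the real content. I would verify: $I_*$ is nonempty ($1 = 0^* \in I_*$ since $0 \in I$); it is up-closed, since $x \in I_*$ and $x \le y$ give $y^* \le x^*$, and $x^* \in I$ with $I$ a down-set gives $y^* \in I$, so $y \in I_*$; and for $x, y \in I_*$ I need $L(x,y) \cap I_* \neq \emptyset$. This last point — down-directedness of $I_*$ — is where I expect the main obstacle, and where condition (iii) must do its work together with the fact that $I$ is down-directed (so there is $w \in U(x^*,y^*) \cap I$, and then I want to produce an element below both $x$ and $y$ whose pseudocomplement lies in $I$, plausibly $w^*$, using (iii) applied with the pair $(x, w)$ and $(y,w)$ to get $x^{**} \le w^* $... hmm, actually (iii) says $x^* \le w \Rightarrow w^* \le x$, and $x^* \le w$ holds, so $w^* \le x$ and likewise $w^* \le y$, giving $w^* \in L(x,y)$; and $w^* \in I$? we have $w \in I$, so $w^{**} \in I^{**} \subseteq I$ by (ii), but I want $w^* $, not $w^{**}$...).

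So the crux is showing $L(x,y) \cap I_* \neq \emptyset$ for $x, y \in I_*$, and the plan is: from $x^*, y^* \in I$ and $I$ down-directed, pick $w \in U(x^*, y^*) \cap I$; condition (iii) (with roles "$x$" $\mapsto x$, "$y$" $\mapsto w$, valid since $x^* \le w$, $w \in I$) yields $w^* \le x$, and symmetrically $w^* \le y$, so $w^* \in L(x,y)$; it remains to see $w^* \in I_*$, i.e.\ $w^{**} \in I$, which follows from $w \in I$ and (ii). Hence $w^* \in L(x,y) \cap I_*$, so $I_*$ is a filter, and finally $(I_*)_* = \{x \mid x^{**} \in I\} = I$: the inclusion $\supseteq$ is immediate since $x \in I$ gives $x^{**} \in I^{**} \subseteq I$ by (ii), and $\subseteq$ is immediate since $x \le x^{**}$ and $I$ is a down-set. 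Thus $I = (I_*)_*$ with $I_* \in \Fil\mathbf P$, so $I$ is a $*$-ideal. I would double-check the finitely-many applications of the stated "well-known facts" ($x \le x^{**}$, $x^{***}=x^*$, antitonicity of $^*$) used above, and make sure the down-directedness argument does not secretly need $U(x^*,y^*)\cap I$ to be taken inside some smaller set — it does not, since any $w$ in that intersection works.
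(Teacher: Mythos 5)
Your final arguments coincide with the paper's: for (i)\,$\Rightarrow$\,(ii) you use $x\in F_*\Rightarrow x^*\in F\Rightarrow x^{***}=(x^{**})^*\in F\Rightarrow x^{**}\in F_*=I$, and for (ii)\,$\wedge$\,(iii)\,$\Rightarrow$\,(i) you take the same witness $F:=I_*$, prove it is a filter via the element $w^*$ with $w\in U(x^*,y^*)\cap I$ (using (iii) for $w^*\in L(x,y)$ and (ii) for $w^*\in I_*$), and conclude $(I_*)_*=I$ exactly as the paper does. The proposal is correct and takes essentially the same route; only the exploratory detours would need to be pruned in a final write-up.
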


\begin{proof}
Let $a,b\in P$. \\
(i) $\Rightarrow$ (ii): \\
There exists some $F\in\Fil\mathbf P$ with $F_*=I$. Now everyone of the following statements implies the next one: $a\in I$; $a\in F_*$; $a^*\in F$; $a^{***}\in F$; $a^{**}\in F_*$; $a^{**}\in I$. \\
$\big($(ii) and (iii)$\big)$ $\Rightarrow$ (i): \\
Assume $a,b\in I_*$. Then $a^*,b^*\in I$ and since $I\in\Id\mathbf P$ there exists some $c\in U(a^*,b^*)\cap I$. Because of (iii) we have $c^*\in L(a,b)$. Since $c\in I$ we have $c^{**}\in I$ by (ii) and hence $c^*\in I_*$. Together we obtain $c^*\in L(a,b)\cap I_*$. If $I_*\ni a\leq b$ then $b^*\leq a^*\in I$ and hence, since $I\in\Id\mathbf P$, $b^*\in I$, i.e. $b\in I_*$. This shows $I_*\in\Fil\mathbf P$. Now the following are equivalent: $a\in(I_*)_*$; $a^*\in I_*$; $a^{**}\in I$; $a\in I$. This shows $I=(I_*)_*$ proving that $I$ is a $*$-ideal of $\mathbf P$.
\end{proof}

The following lemma shows how $*$-ideals can be produced.

\begin{lemma}\label{lem3}
Let $\mathbf P=(P,\leq,{}^*,0)$ be a pseudocomplemented poset, $a\in P$ and $F$ be a filter of $\mathbf P$. Then the following hold:
\begin{enumerate}[{\rm(i)}]
\item $F_*$ is a $*$-ideal of $\mathbf P$
\item $(a^*]=[a)_*$ is a $*$-ideal of $\mathbf P$
\end{enumerate}
\end{lemma}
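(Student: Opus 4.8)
The plan is to prove (i) first and obtain (ii) as an easy consequence. For (i), the key observation is that once we know that $F_*$ is an \emph{ideal} of $\mathbf P$, it is automatically a $*$-ideal: the filter $F$ itself witnesses the defining condition $F_*=I$. So the whole task reduces to checking that $F_*$ is non-empty, down-closed, and satisfies the directedness axiom $U(x,y)\cap F_*\neq\emptyset$ for all $x,y\in F_*$.

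Non-emptiness and down-closedness are routine: $F$ is up-closed and non-empty, hence contains the greatest element $1=0^*$, so $0\in F_*$; and if $x\in F_*$ and $y\le x$, then $x^*\le y^*\in F$, so $y\in F_*$, i.e.\ $(x]\subseteq F_*$. The directedness axiom is the crux. Given $x,y\in F_*$ we have $x^*,y^*\in F$, so by the filter property there is $z\in L(x^*,y^*)\cap F$. From $z\le x^*$ we get $x\le x^{**}\le z^*$, and likewise $y\le z^*$, so $z^*\in U(x,y)$; moreover $z\le z^{**}$ together with $z\in F$ gives $z^{**}\in F$, i.e.\ $z^*\in F_*$. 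Hence $z^*\in U(x,y)\cap F_*\neq\emptyset$, and $F_*$ is an ideal, which proves (i).

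For (ii), I would first check that $[a)$ is a filter and then compute $[a)_*=\{x\in P\mid x^*\in[a)\}=\{x\in P\mid a\le x^*\}$. By the list of well-known equivalences for pseudocomplemented posets recalled above (applied with the second element equal to $x$), the condition $a\le x^*$ is equivalent to $x\le a^*$, so $[a)_*=(a^*]$. Applying part (i) to the filter $F:=[a)$ then shows that $(a^*]=[a)_*$ is a $*$-ideal of $\mathbf P$. The only genuinely delicate point is the directedness step in (i): one must spot that the right member of $U(x,y)\cap F_*$ is $z^*$, where $z$ is supplied by the filter condition applied to $x^*,y^*$, and then use $z\le z^{**}$ to return to $F$.
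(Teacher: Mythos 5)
Your proof is correct and follows essentially the same route as the paper: show $0\in F_*$, verify down-closedness via antitonicity of $^*$, and for directedness take $z\in L(x^*,y^*)\cap F$ and observe that $z^*\in U(x,y)\cap F_*$ (using $z\le z^{**}$ to get $z^{**}\in F$), with (ii) then following from the equivalence $a\le x^*\Leftrightarrow x\le a^*$ and part (i). The only cosmetic difference is that you make explicit the observation that an ideal of the form $F_*$ is automatically a $*$-ideal with $F$ as witness, which the paper leaves implicit.
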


\begin{proof}
\
\begin{enumerate}[(i)]
\item Let $b,c\in F_*$. Because of $0^*=1\in F$ we have $0\in F_*$ and hence $F_*\neq\emptyset$. Since $b^*,c^*\in F$  and $F\in\Fil\mathbf P$ there exists some $d\in L(b^*,c^*)\cap F$. Because of $d\in F$ and $d\leq d^{**}$ we obtain $d^{**}\in F$. Together we get $d^*\in U(b,c)\cap F_*$ and hence $U(b,c)\cap F_*\neq\emptyset$. Finally, $e\leq f\in F_*$ implies $F\ni f^*\leq e^*$ and hence $e^*\in F$, i.e.\ $e\in F_*$.
\item For $b\in P$ the following are equivalent: $b\in[a)_*$; $b^*\in[a)$; $a\leq b^*$; $b\leq a^*$; $b\in(a^*]$. The rest follows from (i).
\end{enumerate}
\end{proof}

\begin{example}
The $*$-ideals of the pseudocomplemented poset from Example~\ref{ex4} are $(0]$, $(b]$, $(c]$ and $(1]$, those of the pseudocomplemented poset from Example~\ref{ex1} are $(0]$ and $(\infty]$ and those of the pseudocomplemented poset from Example~\ref{ex3} are $(0]$, $(a]$, $(b]$ and $(1]$.
\end{example}

The next results show which role dense elements play with respect to ideals and filters.

\begin{lemma}\label{lem2}
Let $\mathbf P=(P,\leq,{}^*,0)$ be a pseudocomplemented poset and $I$ a proper $*$-ideal of $\mathbf P$. Then $I$ does not contain a dense element.
\end{lemma}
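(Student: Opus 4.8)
The plan is to argue by contradiction, exploiting the definition of a $*$-ideal together with part~(i) of Lemma~\ref{lem4}. Suppose $I$ is a proper $*$-ideal of $\mathbf P$ and assume, contrary to the claim, that $I$ contains some dense element $d$, i.e.\ $d^*=0$. Since $I$ is a $*$-ideal, there exists a filter $F$ of $\mathbf P$ with $F_*=I$.

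From $d\in I=F_*$ and the definition $F_*=\{x\in P\mid x^*\in F\}$ we obtain $d^*\in F$. But $d$ is dense, so $d^*=0$, whence $0\in F$. Since every filter is an up-set, $[0)=P\subseteq F$, so $F=P$. By Lemma~\ref{lem4}(i), $F=P$ implies $F_*=P$, hence $I=P$, contradicting the assumption that $I$ is proper. Therefore $I$ contains no dense element.

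There is essentially no real obstacle here: the only points requiring care are the correct unravelling of the definition of the operator $_*$ (so that $d\in F_*$ really does give $d^*\in F$) and the observation that a filter containing $0$ must be all of $P$, after which Lemma~\ref{lem4}(i) closes the argument immediately.
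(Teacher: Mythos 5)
Your argument is correct and follows essentially the same route as the paper's proof: both take the filter $F$ with $F_*=I$, push a dense element $d\in I$ through the definition of $F_*$ to get $0=d^*\in F$, conclude $F=P$, and then invoke Lemma~\ref{lem4}(i) to get $I=P$, contradicting properness. No issues.
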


\begin{proof}
Since $I$ is a $*$-ideal of $\mathbf P$ there exists some $F\in\Fil\mathbf P$ with $F_*=I$. Now everyone of the following statements implies the next one: $I\cap D(\mathbf P)\neq\emptyset$; there exists some $a\in I\cap D(\mathbf P)$; $a\in F_*\cap D(\mathbf P)$; $a\in D(\mathbf P)$ and $a^*\in F$; $0\in F$; $F=P$; $I=P$.
\end{proof}

The assertion of Lemma~\ref{lem2} holds for arbitrary proper ideals of $\mathbf P$ provided $1$ is the only dense element of $\mathbf P$, see (iv) and (v) of the next theorem.

\begin{theorem}\label{th1}
Let $\mathbf P=(P,\leq,{}^*,0)$ be a pseudocomplemented poset and $a\in P$. Then the following are equivalent:
\begin{enumerate}[{\rm(i)}]
\item The element $a$ is Boolean.
\item The ideal $(a]$ is a $*$-ideal of $\mathbf P$.
\item Every ideal $I$ of $\mathbf P$ containing $a$ contains $a^{**}$.
\end{enumerate}
Moreover, the following are equivalent:
\begin{enumerate}
\item[{\rm(iv)}] The element $1$ is the only dense element of $P$.
\item[{\rm(v)}] No proper ideal of $\mathbf P$ contains a dense element.
\end{enumerate}
\end{theorem}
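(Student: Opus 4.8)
The plan is to treat the two equivalences independently, since they rest on different machinery. For the block (i)--(iii) I would prove the cycle (i) $\Rightarrow$ (ii) $\Rightarrow$ (iii) $\Rightarrow$ (i), and for (iv) $\Leftrightarrow$ (v) I would argue directly; in every implication the only nontrivial input is the standard pair of facts $a\leq a^{**}$ and $a^{***}=a^*$ recorded before the theorem, together with Lemma~\ref{lem3} and the preceding characterization of $*$-ideals.

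First, for (i) $\Rightarrow$ (ii): if $a$ is Boolean then $a=a^{**}=(a^*)^*$, so substituting $a^*$ for $a$ in Lemma~\ref{lem3}(ii) gives that $(a]=((a^*)^*]=[a^*)_*$ is a $*$-ideal. Next, for (ii) $\Rightarrow$ (iii): the preceding theorem asserts that every $*$-ideal $I$ satisfies $I^{**}\subseteq I$, so $(a]$ being a $*$-ideal yields $(a]^{**}\subseteq(a]$; reading $(a]^{**}=\{x^{**}\mid x\leq a\}$ and using $a\in(a]$ we obtain $a^{**}\in(a]$, i.e.\ $a^{**}\leq a$, which together with $a\leq a^{**}$ forces $a^{**}=a$. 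Hence any ideal $I$ with $a\in I$ contains $a^{**}=a$. Finally, for (iii) $\Rightarrow$ (i): applying (iii) to the ideal $(a]$ itself, which contains $a$, shows $a^{**}\in(a]$, so $a^{**}\leq a$ and again $a^{**}=a$, i.e.\ $a$ is Boolean. This closes the cycle.

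For the secondary equivalence I would use that $1$ is the greatest element of $\mathbf P$ (so $(1]=P$) and that $1$ is dense. For (iv) $\Rightarrow$ (v): were a proper ideal $I$ to contain a dense element $d$, condition (iv) would force $d=1$, whence $P=(1]\subseteq I$ contradicts properness. For (v) $\Rightarrow$ (iv): since $1$ is dense it remains only to exclude a further dense element; if $d$ were dense with $d\neq1$, then $(d]$ is a proper ideal (indeed $(d]=P$ would say that $d$ is the top element $1$) containing the dense element $d$, contradicting (v).

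I do not anticipate a serious obstacle: each implication collapses to one of the two elementary facts ``$a\leq a^{**}$'' and ``$1$ is the top'', once the relevant previously-proved statement is cited. The step needing the most care is (ii) $\Rightarrow$ (iii), where I must correctly unwind the notation $(a]^{**}$ and invoke the earlier theorem's implication ``$*$-ideal $\Rightarrow I^{**}\subseteq I$''; with that citation in hand the deduction $a^{**}=a$ is immediate.
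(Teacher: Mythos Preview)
Your argument is correct and follows essentially the same route as the paper: the same cycle (i) $\Rightarrow$ (ii) $\Rightarrow$ (iii) $\Rightarrow$ (i) and the same direct argument for (iv) $\Leftrightarrow$ (v). The only cosmetic difference is in (ii) $\Rightarrow$ (iii): you invoke the preceding theorem's implication ``$*$-ideal $\Rightarrow I^{**}\subseteq I$'' to get $a^{**}\in(a]$, whereas the paper unwinds the definition of $*$-ideal directly (take $F$ with $F_*=(a]$ and chase $a\in F_*\Rightarrow a^*\in F\Rightarrow a^{***}\in F\Rightarrow a^{**}\in F_*=(a]$); both land on $a^{**}\leq a$ and then conclude trivially.
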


\begin{proof}
$\text{}$ \\
(i) $\Rightarrow$ (ii): \\
According to Lemma~\ref{lem3}, $(a]=(a^{**}]=[a^*)_*$. \\
(ii) $\Rightarrow$ (iii): \\
If $a\in I\in\Id\mathbf P$ then there exists some $F\in\Fil\mathbf P$ with $F_*=(a]$ and from this we successively obtain $a\in F_*$; $a^*\in F$; $a^{***}\in F$; $a^{**}\in F_*$; $a^{**}\in(a]$; $a^{**}\in I$. \\
(iii) $\Rightarrow$ (i): \\
Everyone of the following statements implies the next one: $a\in(a]\in\Id\mathbf P$; $a^{**}\in(a]$; $a\leq a^{**}\leq a$; $a^{**}=a$; $a\in B(\mathbf P)$. \\
(iv) $\Rightarrow$ (v): \\
Everyone of the following statements implies the next one: $P\neq I\in\Id\mathbf P$; $1\notin I$; $I\cap D(\mathbf P)=I\cap\{1\}=\emptyset$. \\
(v) $\Rightarrow$ (iv): \\
Everyone of the following statements implies the next one: $a\in D(\mathbf P)$; $(a]\in\Id\mathbf P$ and $(a]\cap D(\mathbf P)\supseteq\{a\}\neq\emptyset$; $(a]=P$; $a=1$. \\
\end{proof}

Let $\mathbf P=(P,\leq,{}^*,0)$ be a pseudocomplemented poset, $I$ an ideal of $\mathbf P$ and $F$ a filter of $\mathbf P$. We say that $I$ satisfies the {\em $*$-condition} if for every $x\in P$ exactly one of $x$ and $x^*$ belong to $I$. Analogously, we proceed with $F$. Obviously, if $I$ is a prime ideal then $I$ satisfies the $*$-condition if and only if $P\setminus I$ does the same. An analogous statement holds for $F$.

\begin{theorem}\label{th3}
Let $\mathbf P=(P,\leq,{}^*,0)$ be a pseudocomplemented poset, $a\in P$, $I$ an ideal of $\mathbf P$ and $F$ a filter of $\mathbf P$.
\begin{enumerate}[{\rm(i)}]
\item Assume $I$ to contain no dense element. Then $a\notin I$ or $a^*\notin I$.
\item Assume $F$ to be a proper filter. Then $a\notin F$ or $a^*\notin F$.
\item Assume $I$ to be a prime ideal containing no dense element. Then $I$ satisfies the $*$-condition.
\item Assume $F$ to be a prime filter containing all dense elements. Then $F$ satisfies the $*$-condition.
\item Assume $I$ to satisfy the $*$-condition. Then $I$ contains no dense element.
\item Assume $F$ to be a proper filter satisfying the $*$-condition. Then $F$ contains all dense elements.
\end{enumerate}
\end{theorem}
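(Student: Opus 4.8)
The plan is to prove the six assertions in order, exploiting the duality between ideals and filters via the operator ${}_*$ and the facts about pseudocomplementation collected before Lemma~\ref{lem4}. For (i), suppose for contradiction that both $a\in I$ and $a^*\in I$; since $I$ is an ideal, there exists $c\in U(a,a^*)\cap I$, and from $a,a^*\le c$ we get $a^*\le c^*$ and (using $a\le c$, hence $c^*\le a^*$, together with $a^{**}\le$ anything above... actually) more directly $L(a,a^*)=\{0\}$ forces $c^*\le a^{**}$ and ... the clean route is: from $a,a^*\le c$ we obtain $c^*\le a^*$ and $c^*\le a^{**}$, so $c^*\in L(a^*,a^{**})=\{0\}$, whence $c^*=0$, i.e.\ $c$ is dense and lies in $I$, contradicting the hypothesis. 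Assertion (ii) is then immediate: a proper filter $F$ satisfies $F\cap F_*=\emptyset$ by Lemma~\ref{lem4}(ii), so $a$ and $a^*$ cannot both lie in $F$; alternatively it is the order dual of (i) once one notes a proper filter of a poset with $0$ omits $0$ and hence omits $L(a,a^*)$-witnesses.

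For (iii), let $I$ be a prime ideal containing no dense element and fix $x\in P$. By (i), not both $x,x^*\in I$, so I must show at least one of them lies in $I$. Since $L(x,x^*)=\{0\}\subseteq I$ and $I$ is prime, we get $x\in I$ or $x^*\in I$; combined with (i) this gives \emph{exactly} one, which is the $*$-condition. Assertion (iv) is the dual statement for filters: if $F$ is a prime filter containing $D(\mathbf P)$, then by (ii) not both $x,x^*\in F$, while from $U(x,x^*)\supseteq\{1\}$ and $1\in D(\mathbf P)\subseteq F$ we get $U(x,x^*)\cap F\ni 1$... no, primeness of $F$ needs $U(x,x^*)\subseteq F$. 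Here the correct observation is: $x^{**}\cdot x^*$... rather, for a prime filter $F$, apply Corollary~\ref{cor1} so that $P\setminus F$ is a prime ideal; I claim $P\setminus F$ contains no dense element precisely because $F\supseteq D(\mathbf P)$, and then (iii) applied to $P\setminus F$ yields the $*$-condition for $P\setminus F$, which transfers to $F$ by the remark preceding the theorem (for a prime ideal, the $*$-condition on $I$ and on $P\setminus I$ are equivalent).

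For (v), suppose $I$ satisfies the $*$-condition and $a\in I\cap D(\mathbf P)$; then $a^*=0\in I$, so both $a$ and $a^*$ lie in $I$, contradicting that exactly one of them does. For (vi), let $F$ be a proper filter with the $*$-condition and let $a\in D(\mathbf P)$; then $a^*=0$, and $0\notin F$ since $F$ is proper, so by the $*$-condition $a\in F$. Hence $D(\mathbf P)\subseteq F$.

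The only genuinely delicate point is the proof of (i) (and dually the interior of (iii)/(iv)), where one must squeeze a dense element out of the assumption $a,a^*\in I$: the key computation is that any common upper bound $c$ of $a$ and $a^*$ satisfies $c^*\le a^*$ and $c^*\le a^{**}$, hence $c^*\in L(a^*,a^{**})=\{0\}$ by the listed equivalences, so $c$ is dense. Everything else is bookkeeping with the definitions of prime ideal/filter, the identity $a^*=0\Leftrightarrow a$ dense, the fact that a proper filter avoids $0$, and the already-noted equivalence of the $*$-condition for a prime ideal and its complementary prime filter; I would also point out that (iv) can alternatively be done directly, mirroring (iii), using that $U(x,x^*)$ consists of dense elements (if $u\ge x,x^*$ then $u^*\le x^*$ and $u^*\le x^{**}$ force $u^*=0$), so $U(x,x^*)\subseteq D(\mathbf P)\subseteq F$ and primeness of $F$ then gives $x\in F$ or $x^*\in F$.
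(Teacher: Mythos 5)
Your proof is correct, and for parts (i), (ii), (iv), (v) and (vi) it is essentially the argument the paper gives: squeezing a dense common upper bound of $a$ and $a^*$ out of $L(a^*,a^{**})=\{0\}$ for (i), the $0\notin F$ (equivalently $F\cap F_*=\emptyset$, Lemma~\ref{lem4}(ii)) argument for (ii), passing to the complementary prime ideal via Corollary~\ref{cor1} for (iv), and the trivial $a^*=0$ computations for (v) and (vi). The one place you genuinely diverge is (iii): the paper rules out $a,a^*\notin I$ by observing that then $a,a^*$ would lie in the filter $P\setminus I$ (Corollary~\ref{cor2}), contradicting (ii), whereas you apply the definition of prime ideal directly to $L(x,x^*)=\{0\}\subseteq I$ (legitimate, since $0\in I$ for any ideal) to get $x\in I$ or $x^*\in I$, and then use (i) for exclusivity. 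Your route is slightly more direct and does not need the complement machinery; the paper's route emphasizes the ideal--filter duality it has set up. Your alternative direct proof of (iv) via $U(x,x^*)\subseteq D(\mathbf P)\subseteq F$ is also valid and is a nice self-contained variant that avoids both Corollary~\ref{cor1} and (iii). Minor presentational point: the exploratory back-and-forth in your write-up (the ``actually\dots'' and ``no, primeness of $F$ needs\dots'' passages) should be cleaned out of a final version, but the arguments you settle on are sound.
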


\begin{proof}
\
\begin{enumerate}[(i)]
\item Everyone of the following statements implies the next one: $a,a^*\in I$; there exists some $b\in U(a,a^*)\cap I$; $b\in I$ and $b^*\in L(a^*,a^{**})=\{0\}$; $b\in I\cap D(\mathbf P)$; $I\cap D(\mathbf P)\neq\emptyset$.
\item Everyone of the following statements implies the next one: $a,a^*\in F$; $\{0\}\cap F=L(a,a^*)\cap F\neq\emptyset$; $0\in F$; $F=P$.
\item Because of (i), $a,a^*\in I$ is impossible. If $a,a^*\notin I$ then $a,a^*\in P\setminus I\in\Fil\mathbf P$ according to Corollary~\ref{cor2} contradicting (ii).
\item Since $P\setminus F$ is a prime ideal of $\mathbf P$ according to Corollary~\ref{cor1} and $(P\setminus F)\cap D(\mathbf P)=\emptyset$ we obtain according to (iii) that $P\setminus F$ and hence also $F$ satisfies the $*$-condition.
\item $a\in I\cap D(\mathbf P)$ would imply $a^*=0\in I$ and hence $a,a^*\in I$ contradicting the $*$-condition.
\item $a\in D(\mathbf P)\setminus F$ would imply $a^*=0\notin F$ and hence $a,a^*\notin F$ contradicting the $*$-condition.
\end{enumerate}
\end{proof}

\begin{corollary}\label{cor6}
A prime ideal of a pseudocomplemented poset satisfies the $*$-condition if and only if it contains no dense element.
\end{corollary}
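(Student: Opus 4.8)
The plan is to obtain the corollary directly from Theorem~\ref{th3}, which was set up precisely to package the two implications needed here. Fix a pseudocomplemented poset $\mathbf P=(P,\leq,{}^*,0)$ and let $I$ be a prime ideal of $\mathbf P$. The statement is a biconditional, so I would split the argument into its two directions and invoke the relevant clause of Theorem~\ref{th3} for each.

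For the forward direction, assume $I$ contains no dense element. Since $I$ is by hypothesis a prime ideal, this is exactly the situation covered by Theorem~\ref{th3}(iii), which yields that $I$ satisfies the $*$-condition. For the converse, assume $I$ satisfies the $*$-condition. Here primeness is not even needed: Theorem~\ref{th3}(v) already gives that any ideal satisfying the $*$-condition contains no dense element. Combining the two implications proves the equivalence.

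I do not anticipate any real obstacle: the content was already absorbed into Theorem~\ref{th3}, whose proof in turn rests on the pseudocomplement identities ($a,a^*\in I$ forcing a dense element into $I$ via $U(a,a^*)$, and $L(a,a^*)=\{0\}$ for filters) together with Corollary~\ref{cor2}. The only point to be slightly careful about is to state clearly that the ``only if'' direction uses \emph{only} that $I$ is an ideal, so that the role of primeness is confined to the ``if'' direction, matching the hypotheses of Theorem~\ref{th3}(iii) and (v) respectively.
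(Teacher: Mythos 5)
Your proposal is correct and matches the paper's proof exactly: the paper likewise derives the corollary from parts (iii) and (v) of Theorem~\ref{th3}, with (iii) giving the ``if'' direction and (v) the ``only if'' direction. Your added remark that primeness is only needed for the ``if'' direction is accurate and consistent with the hypotheses of those two parts.
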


\begin{proof}
This follows from (iii) and (v) of Theorem~\ref{th3}.
\end{proof}

The next theorem describes the connections between the $*$-condition for filters and the maximality of them.

\begin{theorem}\label{th2}
Let $\mathbf P=(P,\leq,{}^*,0)$ be a pseudocomplemented poset and $F$ a proper filter of $\mathbf P$. Consider the following conditions:
\begin{enumerate}[{\rm(i)}]
\item $F$ satisfies the $*$-condition.
\item $F$ is an ultrafilter of $\mathbf P$.
\item If $a\in P\setminus F$, $f_1,f_2\in F$ and $a_1,a_2\in P$ and $L(a,f_i)\subseteq(a_i]$ holds for $i=1,2$ then there exists some $f_3\in F$ and some $a_3\in L(a_1,a_2)$ with $L(a,f_3)\subseteq(a_3]$.
\end{enumerate}
Then {\rm(i)} implies {\rm(ii)}, and {\rm(ii)} together with {\rm(iii)} implies {\rm(i)}.
\end{theorem}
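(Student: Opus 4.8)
The plan is to prove the two implications separately. The implication from~(i) to~(ii) is short: assuming $F$ satisfies the $*$-condition, I would take any filter $G$ of $\mathbf P$ with $F\subsetneq G$, pick $a\in G\setminus F$, note that $a\notin F$ forces $a^*\in F\subseteq G$, so that $a,a^*\in G$ and hence $L(a,a^*)\cap G\neq\emptyset$; since $L(a,a^*)=\{0\}$ this gives $0\in G$, i.e.\ $G=P$. Thus no proper filter properly contains $F$, so $F$ is an ultrafilter.

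For the implication from~(ii) together with~(iii) to~(i), fix $a\in P$. By Theorem~\ref{th3}(ii), $F$ being proper already rules out $a,a^*\in F$ simultaneously, so it remains to show that at least one of $a,a^*$ lies in $F$. I would argue by contradiction: suppose $a\notin F$ and $a^*\notin F$, and set
\[
G:=\{y\in P\mid\text{there exists }f\in F\text{ with }L(a,f)\subseteq(y]\}.
\]
The key step is to verify that $G$ is a filter with $F\cup\{a\}\subseteq G$. That $G$ is an up-set is immediate, since $y\leq y'$ gives $(y]\subseteq(y']$. It contains each $f\in F$ (witnessed by $f$ itself, as $L(a,f)\subseteq(f]$) and contains $a$ (witnessed by any $f\in F$, as $L(a,f)\subseteq(a]$), so $G\neq\emptyset$. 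For down-directedness, given $y_1,y_2\in G$ with witnesses $f_1,f_2\in F$, apply~(iii) with $a_i:=y_i$ and with the element $a\in P\setminus F$ of our contradiction hypothesis: this produces $f_3\in F$ and $a_3\in L(y_1,y_2)$ with $L(a,f_3)\subseteq(a_3]$, so $a_3\in G\cap L(y_1,y_2)$.

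Now $a\in G\setminus F$ shows $F\subsetneq G$, and since $F$ is an ultrafilter, $G=P$; in particular $0\in G$, so some $f\in F$ satisfies $L(a,f)\subseteq(0]=\{0\}$, forcing $L(a,f)=\{0\}$. By the recalled equivalences for pseudocomplements this means $f\leq a^*$, and as $F$ is an up-set we get $a^*\in F$, contradicting our assumption. Hence $a\in F$ or $a^*\in F$, and combined with Theorem~\ref{th3}(ii) exactly one of them does; this is the $*$-condition.

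The main obstacle I anticipate is precisely the choice of $G$: in a poset there is in general no smallest filter containing $F\cup\{a\}$ (intersections of filters need not be filters), so the usual ``generated filter'' argument from the lattice or semilattice setting is unavailable. The set $G$ above is the natural substitute, and hypothesis~(iii) is exactly the down-directedness condition needed to turn it into a genuine filter; all the other verifications, and the final contradiction via $0\in G$, are routine.
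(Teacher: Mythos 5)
Your proposal is correct and follows essentially the same route as the paper: the same direct argument for (i)$\Rightarrow$(ii), and for the converse the same auxiliary set $G=\{y\in P\mid\exists f\in F:L(a,f)\subseteq(y]\}$, with condition (iii) supplying exactly the down-directedness needed to make $G$ a filter. Your write-up is in fact slightly more detailed than the paper's, which asserts $G\in\Fil\mathbf P$ without spelling out the verification you give.
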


\begin{proof}
$\text{}$ \\
(i) $\Rightarrow$ (ii): \\
Assume $F\subset G\in\Fil\mathbf P$. Then there exists some $a\in G\setminus F$. Because of (i) we conclude $a^*\in F$ which implies $a^*\in G$. Since $G\in\Fil\mathbf P$ we have $\{0\}\cap G=L(a,a^*)\cap G\neq\emptyset$ and hence $0\in G$ which implies $G=P$. \\
$\big($(ii) and (iii)$\big)$ $\Rightarrow$ (i): \\
Let $a\in P\setminus F$. Put
\[
G:=\{x\in P\mid\text{there exists some }f\in F\text{ with }L(a,f)\subseteq(x]\}.
\]
Because of (iii) we have $F\subset F\cup\{a\}\subseteq G\in\Fil\mathbf P$. Since $F$ is an ultrafilter of $\mathbf P$ we conclude $G=P$. Hence $0\in G$ and therefore there exists some $f\in F$ with $L(a,f)\subseteq(0]$. This means $L(a,f)=\{0\}$ and hence $f\leq a^*$ which implies $a^*\in F$. This shows that $a\in F$ or $a^*\in F$. Together with (ii) of Theorem~\ref{th3} we get (i).
\end{proof}

\begin{remark}
If $\mathbf P$ is a meet-semilattice then condition {\rm(iii)} of Theorem~\ref{th2} is trivially satisfied: One may take $f_3:=f_1\wedge f_2$ and $a_3:=a_1\wedge a_2$.
\end{remark}

\begin{corollary}
Let $\mathbf P=(P,\leq,{}^*,0)$ be a pseudocomplemented poset and $F$ a prime filter of $\mathbf P$ containing all dense elements. Then $F$ is an ultrafilter of $\mathbf P$.
\end{corollary}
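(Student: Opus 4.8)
The plan is to chain together two results already established. First I would invoke Theorem~\ref{th3}(iv): since $F$ is by hypothesis a prime filter of $\mathbf P$ containing all dense elements, that part of the theorem tells us directly that $F$ satisfies the $*$-condition, i.e.\ for every $x\in P$ exactly one of $x$ and $x^*$ lies in $F$.

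Having obtained the $*$-condition for $F$, the second and final step is to apply the implication ``(i) $\Rightarrow$ (ii)'' of Theorem~\ref{th2}, which says precisely that a proper filter satisfying the $*$-condition is an ultrafilter. (Note that $F$ is indeed proper: a prime filter is proper by definition.) Composing the two implications yields that $F$ is an ultrafilter of $\mathbf P$, as desired.

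So the corollary is an immediate consequence of Theorem~\ref{th3} and Theorem~\ref{th2}; there is really no separate obstacle to overcome here, since all the work has been packaged into those two theorems. The only point that needs a moment's attention is checking that the hypotheses line up: ``prime filter'' gives both properness and the primeness used in Theorem~\ref{th3}(iv), and ``containing all dense elements'' is exactly the remaining hypothesis of that part. Once the $*$-condition is in hand, Theorem~\ref{th2} does not even require condition (iii), since only the direction (i) $\Rightarrow$ (ii) is used, and that direction holds unconditionally for proper filters.

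\begin{proof}
Since $F$ is a prime filter of $\mathbf P$ containing all dense elements, $F$ satisfies the $*$-condition by (iv) of Theorem~\ref{th3}. Being a prime filter, $F$ is proper, so by the implication (i) $\Rightarrow$ (ii) of Theorem~\ref{th2} we conclude that $F$ is an ultrafilter of $\mathbf P$.
\end{proof}
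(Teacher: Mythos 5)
Your proof is correct and follows exactly the same route as the paper: apply Theorem~\ref{th3}(iv) to obtain the $*$-condition, then the implication (i) $\Rightarrow$ (ii) of Theorem~\ref{th2} to conclude that $F$ is an ultrafilter. Your extra remarks about properness and about not needing condition (iii) of Theorem~\ref{th2} are accurate and only make the argument more explicit.
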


\begin{proof}
Because of {\rm(iv)} of Theorem~\ref{th3}, $F$ satisfies the $*$-condition and hence $F$ is an ultrafilter of $\mathbf P$ according to Theorem~\ref{th2}.
\end{proof}

Note that the ultrafilters of the pseudocomplemented posets from Examples~\ref{ex4}, \ref{ex1} and \ref{ex3} are exactly the prime filters containing all dense elements.

The following lemma is the key result for the next Separation Theorem.

\begin{lemma}\label{lem6}
Let $\mathbf P=(P,\leq,{}^*,0)$ be a pseudocomplemented poset and $F$ a proper filter of $\mathbf P$ satisfying the $*$-condition. Then $F$ is a prime filter of $\mathbf P$ containing all dense elements, $F_*$ is a $*$-ideal of $\mathbf P$ and $F_*=P\setminus F$.
\end{lemma}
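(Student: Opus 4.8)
The plan is to reduce everything to the single identity $F_*=P\setminus F$; once that is established, the three assertions of the lemma fall out of results already in hand (Lemma~\ref{lem3}, Corollary~\ref{cor1}, and the elementary facts about dense elements).

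First I would prove $F_*=P\setminus F$ directly from the $*$-condition. For $x\in P$ one has $x\in F_*$ if and only if $x^*\in F$; but the $*$-condition says that exactly one of $x$ and $x^*$ lies in $F$, so $x^*\in F$ is equivalent to $x\notin F$. Hence $x\in F_*$ iff $x\in P\setminus F$, i.e.\ $F_*=P\setminus F$. This is the only place where the hypothesis is used in an essential way.

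Next, since $F$ is a filter, Lemma~\ref{lem3}(i) gives that $F_*$ is a $*$-ideal of $\mathbf P$ — in particular an ideal. Combined with the identity just proved, $P\setminus F$ is an ideal of $\mathbf P$, so Corollary~\ref{cor1} immediately yields that $F$ is a prime filter of $\mathbf P$. Thus two of the remaining claims, "$F$ is a prime filter" and "$F_*$ is a $*$-ideal", are settled simultaneously. Finally, to see that $F$ contains every dense element $a$: from $a\in D(\mathbf P)$ we get $a^*=0$, and since $F$ is a proper filter of a poset with least element we have $0\notin F$, so $a^*\notin F$; the $*$-condition then forces $a\in F$. (This is just Theorem~\ref{th3}(vi).)

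I do not expect a genuine obstacle here; the proof is essentially an assembly of earlier statements. The one point that needs a moment's care is recognizing that, by definition, a $*$-ideal is in particular an ideal, so Lemma~\ref{lem3}(i) supplies precisely the hypothesis ("$P\setminus F$ is an ideal") required to apply Corollary~\ref{cor1}.
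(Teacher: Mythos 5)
Your proof is correct and follows essentially the same route as the paper: establish $F_*=P\setminus F$, deduce from Lemma~\ref{lem3}(i) and Corollary~\ref{cor1} that $F_*$ is a $*$-ideal and $F$ a prime filter, and use the argument of Theorem~\ref{th3}(vi) for the dense elements. The only cosmetic difference is that you get $F\cap F_*=\emptyset$ directly from the ``exactly one'' clause of the $*$-condition, whereas the paper cites Lemma~\ref{lem4}(ii) for that half and uses the $*$-condition only for $F\cup F_*=P$.
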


\begin{proof}
From (i) of Lemma~\ref{lem3} we see that $F_*$ is a $*$-deal of $\mathbf P$. Because of (ii) of Lemma~\ref{lem4} we have $F\cap F_*=\emptyset$. Since $F$ satisfies the $*$-condition we have $F\cup F_*=P$. Hence $F_*=P\setminus F$: According to Corollary~\ref{cor1}, $F$ is a prime filter of $\mathbf P$ that contains all dense elements because of (vi) of Theorem~\ref{th3}.
\end{proof}

\begin{corollary}
A proper filter of a pseudocomplemented poset satisfies the $*$-condition if and only if it contains all dense elements.
\end{corollary}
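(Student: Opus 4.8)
The plan is to read both implications off results already established, since this corollary is little more than a repackaging of Lemma~\ref{lem6} together with parts~(ii), (iv) and~(vi) of Theorem~\ref{th3}. For the direction from the $*$-condition to density, if $F$ is a proper filter of $\mathbf P$ satisfying the $*$-condition then Lemma~\ref{lem6} applies verbatim and yields $D(\mathbf P)\subseteq F$; alternatively this half is exactly part~(vi) of Theorem~\ref{th3}, whose only hypothesis is that $F$ be proper. So that direction is a one-line citation.

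For the converse I would fix $x\in P$ under the assumption $D(\mathbf P)\subseteq F$ and aim to show that precisely one of $x$ and $x^*$ lies in $F$. That \emph{at most one} of them does is part~(ii) of Theorem~\ref{th3}, using properness of $F$. For \emph{at least one}, the observation to exploit is that every common upper bound $u$ of $x$ and $x^*$ is dense: $x\leq u$ gives $u^*\leq x^*$, $x^*\leq u$ gives $u^*\leq x^{**}$, hence $u^*\in L(x^{**},x^*)$, and this set equals $\{0\}$ by the recalled facts on pseudocomplementation (since $L(x,x^*)=\{0\}$ and $L(a,b)=\{0\}\Leftrightarrow L(a^{**},b)=\{0\}$). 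Thus $U(x,x^*)\subseteq D(\mathbf P)\subseteq F$. Once $F$ is known to be a \emph{prime} filter --- equivalently, once $P\setminus F$ is an ideal, by Corollary~\ref{cor1} --- the inclusion $U(x,x^*)\subseteq F$ forces $x\in F$ or $x^*\in F$, which together with part~(ii) of Theorem~\ref{th3} is the $*$-condition. Said differently, as soon as primeness of $F$ is in hand, part~(iv) of Theorem~\ref{th3} closes the argument at once.

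The step I expect to be the real obstacle is precisely this appeal to primeness: both part~(iv) of Theorem~\ref{th3} and the argument above are phrased for \emph{prime} filters, so to establish the reverse implication one has to produce primeness from the present hypotheses, i.e.\ to show that a proper filter containing all dense elements is already prime (the version dual to Corollary~\ref{cor1} and to the way primeness enters parts~(iii)--(iv) of Theorem~\ref{th3}). Once that point is settled, the remainder of the argument is immediate.
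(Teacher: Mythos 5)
Your forward direction is correct and matches the paper: Lemma~\ref{lem6} (or part~(vi) of Theorem~\ref{th3}) gives $D(\mathbf P)\subseteq F$ for any proper filter $F$ satisfying the $*$-condition. The problem is the converse, and you have correctly located where it lives but not closed it: your argument (and equally an appeal to part~(iv) of Theorem~\ref{th3}) needs $F$ to be \emph{prime}, and you leave the claim ``a proper filter containing all dense elements is prime'' as an unresolved obstacle. That is a genuine gap, and in fact it cannot be filled, because the claim is false. In the pseudocomplemented poset of Example~\ref{ex4}, take $F=[b)=\{b,d,e,1\}$: this is a proper filter, it contains all of $D(\mathbf P)=\{d,e,1\}$, it is not prime ($P\setminus[b)=\{0,a,c\}$ is not an ideal since $U(a,c)=\{d,e,1\}$ misses it), and it violates the $*$-condition at $x=a$, since neither $a$ nor $a^*=c$ lies in $[b)$. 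So the implication ``contains all dense elements $\Rightarrow$ $*$-condition'' fails for general proper filters; your intermediate observation $U(x,x^*)\subseteq D(\mathbf P)\subseteq F$ is correct but, without primeness, does not force $x\in F$ or $x^*\in F$ (here $U(a,a^*)=\{d,e,1\}\subseteq[b)$ yet $a,a^*\notin[b)$).

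For what it is worth, the paper's own one-line proof (``follows from (iv) of Theorem~\ref{th3} and Lemma~\ref{lem6}'') commits exactly the slip you were wary of: part~(iv) is stated for prime filters, and primeness is not among the hypotheses of the corollary. So your diagnosis that primeness is ``the real obstacle'' is accurate --- but the honest conclusion is that the reverse implication requires an additional hypothesis (primeness of $F$), not that a cleverer argument will recover it. As written, your proposal does not prove the stated corollary.
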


\begin{proof}
This follows from (iv) of Theorem~\ref{th3} and from Lemma~\ref{lem6}.
\end{proof}

\begin{corollary}\label{cor4}
{\rm(}{\bf Separation Theorem for $*$-ideals}{\rm)} Let $\mathbf P=(P,\leq,{}^*,0)$ be a pseudocomplemented poset, $I$ an ideal of $\mathbf P$ and $F$ a filter of $\mathbf P$ satisfying the $*$-condition and assume $I\cap F=\emptyset$. Then there exists some $*$-ideal $J$ of $\mathbf P$ with $I\subseteq J$ and $J\cap F=\emptyset$.
\end{corollary}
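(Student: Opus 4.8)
The plan is to reduce the whole statement to Lemma~\ref{lem6}, after first checking that its hypotheses are met. The point to verify is that a filter $F$ satisfying the $*$-condition is automatically proper: since $1=0^*$ is the greatest element of $\mathbf P$, every non-empty filter contains $1$, so $1\in F$; applying the $*$-condition to $1$, whose pseudocomplement is $1^*=0$, we see that exactly one of $1,0$ lies in $F$, and as $1\in F$ this forces $0\notin F$, i.e.\ $F\neq P$. Hence $F$ is a proper filter satisfying the $*$-condition, which is exactly the hypothesis of Lemma~\ref{lem6}.

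Next I would invoke Lemma~\ref{lem6} to obtain that $F_*$ is a $*$-ideal of $\mathbf P$ and that $F_*=P\setminus F$. With this in hand, I simply put $J:=F_*$. From the assumption $I\cap F=\emptyset$ every element of $I$ lies in $P\setminus F$, so $I\subseteq P\setminus F=F_*=J$; and $J\cap F=(P\setminus F)\cap F=\emptyset$. Thus $J$ is a $*$-ideal with $I\subseteq J$ and $J\cap F=\emptyset$, as required. (As a byproduct $J$ is automatically proper, since $F$ is non-empty.)

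There is essentially no obstacle here: all the substantive work has already been carried out in Lemma~\ref{lem6}, which in turn rests on Lemma~\ref{lem3}(i), Lemma~\ref{lem4}(ii), Corollary~\ref{cor1}, and Theorem~\ref{th3}(vi). The only subtlety to keep in mind is the implicit properness of $F$ recorded in the first paragraph, which must be observed before Lemma~\ref{lem6} is applicable; everything else is immediate from the identity $F_*=P\setminus F$.
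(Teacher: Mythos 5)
Your proof is correct and follows the same route as the paper, which simply sets $J:=F_*$ and invokes Lemma~\ref{lem6}. Your explicit check that the $*$-condition forces $F$ to be proper (via $1\in F$ and $1^*=0$) fills in a hypothesis of Lemma~\ref{lem6} that the paper leaves implicit; one could also get properness even more directly from $I\cap F=\emptyset$ with $I\neq\emptyset$.
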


\begin{proof}
Put $J:=F_*$ and apply Lemma~\ref{lem6}.
\end{proof}

\begin{example}
Consider the pseudocomplemented poset from Example~\ref{ex4}. We have: $(a]$ is an ideal of $\mathbf P$ that is not a $*$-ideal of $\mathbf P$, $[c)$ is a filter of $\mathbf P$ satisfying the $*$-condition and $(a]\cap[c)=\emptyset$. According to Corollary~\ref{cor4} there exists some $*$-ideal $J$ of $\mathbf P$ with $(a]\subseteq J$ and $J\cap[c)=\emptyset$. One can take $J:=(b]$.
\end{example}

{\bf Author contributions} Both authors contributed equally to this manuscript.

{\bf Funding} Open access funding provided by TU Wien (TUW). This study was funded by the Austrian Science Fund (FWF), project I~4579-N, and the Czech Science Foundation (GA\v CR), project 20-09869L.

{\bf Data Availability} Not applicable.

{\bf Declarations}

{\bf Conflicts of interest} The authors declare that they have no conflict of interest.

Authors' addresses:

Ivan Chajda \\
Palack\'y University Olomouc \\
Faculty of Science \\
Department of Algebra and Geometry \\
17.\ listopadu 12 \\
771 46 Olomouc \\
Czech Republic \\
ivan.chajda@upol.cz

Helmut L\"anger \\
TU Wien \\
Faculty of Mathematics and Geoinformation \\
Institute of Discrete Mathematics and Geometry \\
Wiedner Hauptstra\ss e 8-10 \\
1040 Vienna \\
Austria, and \\
Palack\'y University Olomouc \\
Faculty of Science \\
Department of Algebra and Geometry \\
17.\ listopadu 12 \\
771 46 Olomouc \\
Czech Republic \\
helmut.laenger@tuwien.ac.at
\end{document}